\newtheorem{theorem}{Theorem}
\newtheorem{lemma}[theorem]{Lemma}
\newtheorem{proposition}[theorem]{Proposition}
\newtheorem{corollary}[theorem]{Corollary}
\theoremstyle{definition}
\newtheorem{definition}[theorem]{Definition}
\newtheorem{conjecture}[theorem]{Conjecture}
\def\thm#1{Theorem~\ref{thm:#1}}
\newcommand{\R}{\mathbb{R}}
\newcommand{\Z}{\mathbb{Z}}
\newcommand{\TC}{\operatorname{TC}}
\newcommand{\arc}[1]{\gamma_{#1}}
\newcommand{\Len}{\operatorname{Len}}
\newcommand\norm[1]{\left| #1 \right|}
\newcommand{\slq}{Slq}
\newcommand{\FTC}{\operatorname{FTC}}
\newcommand{\FTCWC}{\operatorname{FTCWC}}
\def\co{\colon\!}
\begin{document}
\title{Square-like quadrilaterals inscribed in embedded space curves}

\author{Jason Cantarella}
\address{Mathematics Department, University of Georgia, Athens GA 30602}
\email{jason.cantarella@uga.edu}
\author{Elizabeth Denne}
\address{Mathematics Department, Washington \& Lee University, Lexington VA 24450}
\email{dennee@wlu.edu}
\author{John McCleary}
\address{Mathematics \& Statistics Department, Vassar College, Poughkeepsie NY 12604}
\email{mccleary@vassar.edu}

\makeatletter								
\@namedef{subjclassname@2020}{%
  \textup{2020} Mathematics Subject Classification}
\makeatother

\subjclass[2020]{Primary 53A04, Secondary 55R80, 57Q65, 58A20, 51M04}
\keywords{Square-peg problem, square-like quadrilaterals, embedded space curves, finite total curvature.}

\begin{abstract} 
The square-peg problem asks if every Jordan curve in the plane has four points which are the vertices of a square. 
The problem is open for continuous Jordan curves, but it has  been resolved for various regularity classes of curves between continuous and $C^1$-smooth Jordan curves.  Here, in a generalization of the square-peg problem, we consider embedded curves in space, and ask if they have inscribed  quadrilaterals with equal sides and equal diagonals. We call these quadrilaterals ``square-like''.
We give a regularity class (finite total curvature without cusps) in which we can prove that every embedded curve has an inscribed square-like quadrilateral. The key idea is to use local data to show that short enough arcs have small curvature, thus ruling out small squares. This allows us to successfully use a limiting argument on approximating curves.  
 
\end{abstract}
\date{\today}
\maketitle


\section{Introduction}\label{sect:intro} 

Take an embedding $\gamma\co S^1\hookrightarrow \R^n$ of a circle in $\R^n$. An {\em inscribed polygon} is a polygon whose vertices lie on the curve. Note that when $n=2$, the sides of the polygon do not have to lie in the interior of the planar curve. There is a series of problems which ask what kind of quadrilaterals can be inscribed in such curves\footnote{Including recent results on finding inscribed rectangles and cyclic quadrilaterals; see for instance \cite{MR3810027, MR4061975, MR2038265, greene2020cyclic, hugelmeyer2018smooth, hugelmeyer2019inscribed, matschke2020quadrilaterals, schwartz2018rectangle, schwartz2019trichotomy}.}.  A classic example of such a problem is the old, yet still open, question due to O.~Toeplitz~\cite{Toeplitz}. He asked whether a square can be inscribed in a Jordan curve (a continuous, simple, closed curve in the plane).  If we think of the Jordan curve as the ``round hole'', this conjecture has affectionately been nick-named the {\em square-peg problem} by mathematicians.   There have been many attempts to resolve the square-peg problem, and a brief overview of the history of the problem can be found in Appendix~\ref{sect:history}, as well as a number of survey articles (see for instance~\cite{MR2492772, MR3184501, Pak-Discrete-Poly-Geom}).

\begin{figure}[t]
\hfill
\raisebox{-0.5\height}{\includegraphics[height=1.5in]{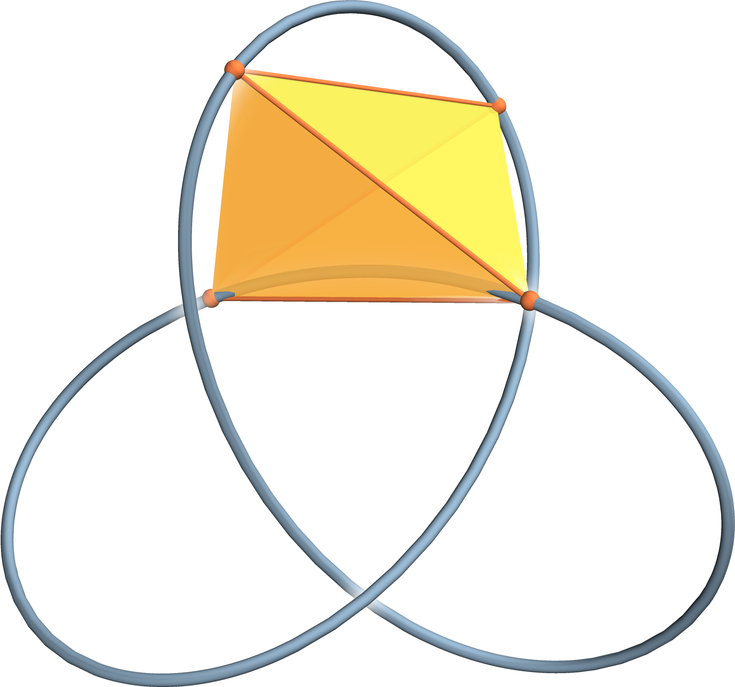}}
\hfill
\raisebox{-0.5\height}{\includegraphics[height=1.25in]{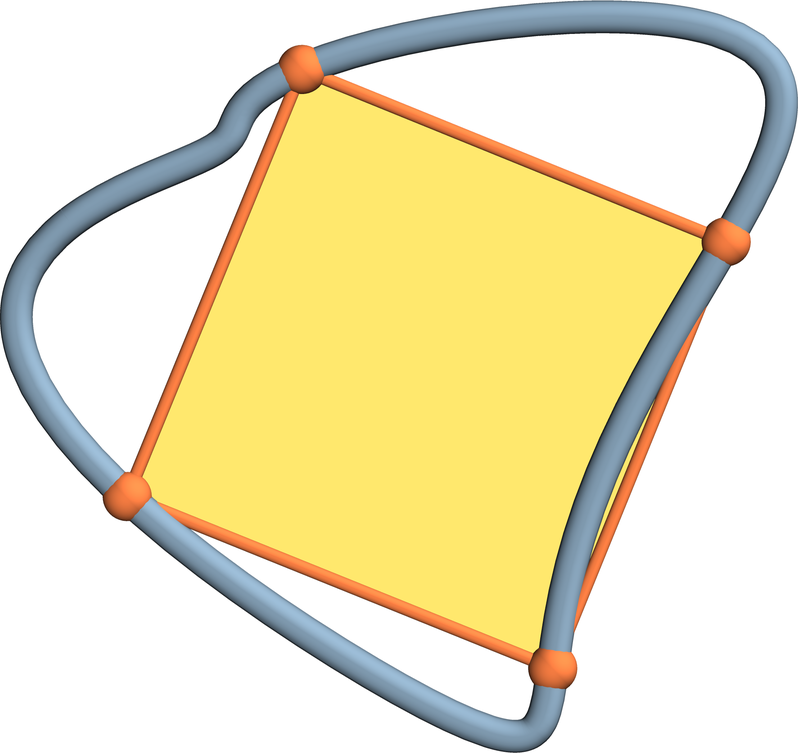}}
\hfill
\hphantom{.}
\caption{On the left, we see a square-like quadrilateral inscribed in a space curve. The four equal sides are marked in red, while the diagonals are unmarked. The shape forms a special tetrahedron. On the right, we see a square-like quadrilateral inscribed in a plane curve. This shows that a planar square-like quadrilateral is indeed a square. 
\label{fig:firstexample}}
\end{figure} 

If we think of a square as being a quadrilateral that has equal sides and equal diagonals, then we have a property of quadrilaterals that holds in any dimension. We say such a quadrilateral is {\em square-like} (see Definition~\ref{def:slq}).  In this paper, we look at a generalization of the square-peg problem:
\begin{conjecture} There is a square-like quadrilateral inscribed in any embedding of $S^1$ in $\R^n$. 
\label{conj:slq}
\end{conjecture}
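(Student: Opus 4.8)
The plan is to follow the classical two-step template for the square-peg problem---establish the statement for smooth embeddings by a topological count, then pass to arbitrary embeddings by approximation---while being honest that the second step is exactly where the difficulty (and the openness of the conjecture) lives.

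First, for the smooth case, I would set up a configuration space and a test map whose zeros are the inscribed square-like quadrilaterals. Parametrizing $S^1$, consider the space $X$ of ordered $4$-tuples $(t_1,t_2,t_3,t_4)$ in cyclic order, a $4$-manifold with corners, and write $A=\gamma(t_1),B=\gamma(t_2),C=\gamma(t_3),D=\gamma(t_4)$. Define $F\co X \to \R^4$ by recording the three consecutive side-length differences $\len{12}-\len{23},\ \len{23}-\len{34},\ \len{34}-\len{41}$ together with the diagonal defect $\len{13}-\len{24}$. A zero of $F$ in the interior of $X$ at which the four points are distinct is precisely an inscribed square-like quadrilateral, and a dimension count ($4$ conditions on a $4$-dimensional space) suggests a discrete solution set. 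The cyclic group $\Z/4$ acts on $X$ by rotating indices and acts linearly on the target, cyclically permuting the side defects and reversing the diagonal defect; I would exploit this symmetry to show that for a generic smooth embedding the zero set is a finite collection of transverse $\Z/4$-orbits and that the associated equivariant (signed or mod-$2$) count is invariant as the embedding is deformed.

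Next I would compute that invariant on a model curve. On a round planar circle every inscribed square-like quadrilateral is a genuine square (as in the right panel of \figr{firstexample}), and these occur in a degenerate one-parameter family; perturbing to an ellipse should break the family into a positive, odd number of transverse square-like quadrilaterals, matching the classical fact that a generic smooth convex curve has an odd number of inscribed squares. Combining this computation with deformation invariance would give the smooth case---at least after addressing how the count is transported from the planar model to a general smooth embedding (immediate for $n\ge 4$, where all embeddings are isotopic to the round circle, and requiring more care against knotting when $n=3$)---provided the count cannot escape to the boundary of $X$.

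That proviso is the crux, and it is the same obstacle that blocks a limiting argument for arbitrary embeddings. When I approximate a continuous embedding $\gamma$ by smooth embeddings $\gamma_k \to \gamma$ and extract inscribed square-like quadrilaterals $Q_k$, compactness yields a limiting configuration $Q$ that satisfies equal sides and equal diagonals by continuity---but nothing prevents all four vertices of $Q_k$ from collapsing to a single point, so $Q$ may be the degenerate zero-size quadrilateral. Ruling out these vanishing quadrilaterals for an \emph{arbitrary} curve is exactly the content of the open conjecture, just as in the classical square-peg problem, so I expect the full statement to be out of reach by these methods alone. The decisive missing ingredient is a uniform lower bound on the size of inscribed square-like quadrilaterals, and this is precisely why the remainder of the paper restricts to curves of finite total curvature without cusps: there the local geometry forces short arcs to have small curvature, which rules out small quadrilaterals and makes the limiting argument go through.
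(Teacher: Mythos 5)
Note first that the statement you were handed is the paper's Conjecture~\ref{conj:slq}, which the paper itself does \emph{not} prove and explicitly leaves open for continuous embeddings; its actual result, Theorem~\ref{thm:FTCWC}, establishes the conjecture only for embeddings of finite total curvature without cusps ($\FTCWC$). Your proposal correctly reproduces the paper's strategy and its limits: the smooth generic case is the cited Theorem~\ref{thm:squarepeg} (proved in \cite{Slq} via compactified configuration spaces and multijet transversality rather than your direct $\Z/4$-equivariant count, though in the same spirit, and subject to the same escape-to-boundary proviso you flag), the collapse of quadrilaterals to points under approximation is exactly the obstruction the paper identifies, and the uniform sidelength lower bound you call the ``decisive missing ingredient'' is precisely what the paper's $\pi$-distance $\operatorname{\pi-d}(\gamma)$ supplies under the $\FTCWC$ hypothesis---so your diagnosis matches the paper's own treatment, and nothing stronger can be expected since the conjecture itself remains open.
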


Two examples of inscribed square-like quadrilaterals are shown in Figure~\ref{fig:firstexample}. Just as with the square-peg problem, we expect Conjecture~\ref{conj:slq} to be proved for various regularity classes of curves.  Indeed, in  \cite{Slq}, we proved that there is a dense family of $C^\infty$-smoothly embedded $S^1$ in $\R^n$, each of which has an odd number of inscribed square-like quadrilaterals. (This result and the Whitney $C^\infty$ topology used are reviewed in Section~\ref{sect:slq}.)  Going back to the square-peg problem, there are a number of solutions which required a mild-regularity hypothesis, for example those of W.~Stromquist~\cite{MR1045781}, B.~Matschke~\cite{phd-Matschke} and T.~Tao~\cite{MR3731730}. These hypotheses require the curve to be planar, and the regularity of the embedding lies  somewhere between continuous and $C^1$-smooth.  In this paper, we find a regularity condition of a similar flavor that allow us to resolve Conjecture~\ref{conj:slq}, but our regularity condition  (discussed below) holds in any dimension.

The key idea in this paper is to look at a limiting argument. That is, take any curve which is a limit of 
a sequence of $C^\infty$-smooth curves with inscribed square-like quadrilaterals. Can we show that the limiting curve also has an inscribed square-like quadrilateral?
The problem is clear: The sequence of square-like quadrilaterals on the approximating curves may have sidelengths approaching zero. If one could construct a general lower bound on these sidelengths in terms of the global geometry of the curves with the inscribed square-like quadrilaterals, this possibility could be ruled out. We do not know of any explicit example of a family of curves where all the inscribed square-like quadrilaterals have sidelengths converging to zero, so this approach may yet be possible. However, this line of attack has been more or less obvious from the start, and nobody has managed to construct such an argument in the past century.

Our considerably more modest goal in this paper is to rule out small square-like quadrilaterals using \emph{local}, rather than global, data about the limit curve, and in this way to extend our results to the class of curves of \emph{finite total curvature without cusps} ($\FTCWC$). In Section~\ref{sect:ftc}, we review the definition of what it means for a curve to be of finite total curvature, and explore what it means for a sequences of curves to converge uniformly in position, arclength and total curvature (Definition~\ref{def:uniform in pos len tc}). As well as allowing us to resolve Conjecture~\ref{conj:slq}, the appeal of this choice of regularity class is largely that the class of curves of finite total curvature is a well-understood space~(cf.~\cite{math.GT/0606007}). 

Our argument is completed in three parts in Section~\ref{sect:curve-pi}. First, we show that each embedded curve $\gamma$ in $\FTCWC$ has no inscribed square-like quadrilaterals with side length smaller than a positive constant, denoted by $\operatorname{\pi-d}(\gamma)$. 
Next, we show that if there is a sequence of curves $\gamma_i$ converging to $\gamma$ in position, arclength and total curvature, and $\operatorname{\pi-d}(\gamma)>0 $, then $\operatorname{\pi-d}(\gamma_i)>0$ too. 
Finally, we take any $\FTCWC$ embedding $\gamma$ of $S^1$ in $\R^n$. We approximate $\gamma$ by a sequence of smooth $\FTC$ embeddings $\gamma_i$ of $S^1$ in $\R^n$, each of which has an odd number of square-like quadrilaterals.   The first two steps then imply that the corresponding sequence of square-like quadrilaterals has a convergent subsequence with limit a square-like quadrilateral with sidelength at least $\operatorname{\pi-d}(\gamma)$.
This give us our main result in \thm{FTCWC}: Any embedding of $S^1$ in $\R^n$ that is in $\FTCWC$ has an inscribed square-like quadrilateral.

One immediate consequence of  \thm{FTCWC} is that we resolve the square-peg problem for (planar) Jordan curves which are in $\FTCWC$. How does this regularity class compare to the other solutions of the square-peg problem? Stromquist~\cite{MR1045781} and Matschke~\cite{phd-Matschke,MR3184501} give local conditions for planar curves that are weaker than ours. However, unlike our results, they do not discuss embeddings of circles in $\R^n$ for $n>2$. Tao~\cite{MR3731730} gives a different regularity condition. His condition is for Jordan curves which are the union of two Lipschitz graphs that agree at the endpoints, and whose Lipschitz constants are strictly less than one. Such curves might be thought of intuitively as ``vertically star-like''.  Tao's class of curves and the class of FTCWC curves intersect one another, but neither class of curves fully contains the other.  


\section{Topology and square-like quadrilaterals}
\label{sect:slq}

We use compactified configuration spaces and multijet transversality in \cite{Slq} to prove that there is a dense family of smoothly embedded circles in $\R^n$, each of which has an odd number of inscribed square-like quadrilaterals.  

\begin{definition} \label{def:slq} A {\em square-like quadrilateral} in $\R^n$ is a quadrilateral $pqrs$ with equal sides ($|pq|=|qr|=|rs|=|sp|$) and equal diagonals ($|pr|=|qs|$). The set of all square-like quadrilaterals in $\R^n$ is denoted by $\slq$.
\end{definition} 
The structure of $\slq$ is discussed in detail in \cite{Slq}. We observe that when $n=2$, a square-like quadrilateral is a square.  When $n\geq 3$, a square-like quadrilateral is a tetrahedron, and is also known as a tetragonal disphenoid\footnote{By definition, a tetragonal disphenoid is a tetrahedron with  four congruent isosceles triangle faces.}.   In \cite{Slq}, we prove the following.

\begin{theorem}[See Theorem 35 \cite{Slq}]
Take any smooth embedding of a curve $\gamma\co S^1\hookrightarrow \R^n$. Then there is a $C^\infty$-open neighborhood of $\gamma$ in $C^\infty(S^1,\R^n)$, in which there is, for all $m$, a $C^m$-dense set of smooth embeddings $\gamma'\co S^l\hookrightarrow \R^n$, each of which has an odd, finite set of inscribed square-like quadrilaterals

\label{thm:squarepeg}
\end{theorem}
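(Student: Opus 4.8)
The plan is to recast ``inscribed square-like quadrilateral'' as a transverse preimage in a configuration space and then to count the solutions with a mod-$2$ invariant. Work with the space of ordered $4$-tuples of distinct points on $S^1$, a smooth $4$-manifold on which the dihedral symmetry group $D_4$ of the square acts freely; a fixed embedding $\gamma$ gives the evaluation map
\[
\gamma^{(4)}\co (t_1,t_2,t_3,t_4)\longmapsto \bigl(\gamma(t_1),\gamma(t_2),\gamma(t_3),\gamma(t_4)\bigr)\in(\R^n)^4 .
\]
Inside $(\R^n)^4$, the locus $\slq$ cut out by the three equal-side equations ($|pq|=|qr|=|rs|=|sp|$) and the one equal-diagonal equation ($|pr|=|qs|$) is, away from its degenerate strata, a submanifold of codimension $4$. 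Since $\slq$ is $D_4$-invariant, the condition $\gamma^{(4)}(\mathbf t)\in\slq$ descends to the quotient $4$-manifold, where each geometric square-like quadrilateral is represented once. If $\gamma^{(4)}\transverse\slq$, then the inscribed square-like quadrilaterals form a set of dimension $4+(4n-4)-4n=0$, hence a finite set.

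Next I would produce the dense family of embeddings using Thom's multijet transversality theorem. The map $\gamma^{(4)}$ is assembled from the $4$-fold multijet evaluation of $\gamma$, so multijet transversality applies directly: the set of smooth embeddings $\gamma'$ with $\gamma'^{(4)}\transverse\slq$ is residual, and therefore $C^m$-dense for every $m$, in $C^\infty(S^1,\R^n)$. Once the evaluation map has been resolved on the compactified configuration space $C_4[S^1]$ (see below), transversality becomes a stable, hence open, condition, so this dense set sits inside a $C^\infty$-open neighborhood of $\gamma$ on which every member has a finite, generically transverse set of inscribed square-like quadrilaterals.

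It remains to show the count is always \emph{odd}, and this is where I would use the compactified configuration space as a manifold with boundary. Transversality makes the $\Z/2$ intersection number $\#(\gamma'^{(4)})^{-1}(\slq)\bmod 2$ well defined, and a cobordism argument shows it is invariant under (generic) homotopies of $\gamma'$ provided no intersection point escapes to the collision boundary; the compactification is exactly what supplies the control needed to pin this number down as a universal topological quantity governed by the boundary faces of $C_4[S^1]$. I would then evaluate that quantity on a convenient model whose inscribed square-like quadrilaterals can be listed by hand (a generically perturbed planar ellipse, whose single inscribed square survives) to conclude that the parity equals $1$. Local constancy of the invariant on the neighborhood then forces an odd count for every $\gamma'$ in the dense set.

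The main obstacle is the analysis along the degenerate strata of $\slq$, where the quadrilateral shrinks to a point or collapses onto a diagonal; there $\slq$ is not a manifold and $\gamma^{(4)}$ may meet it non-transversally. Resolving this is precisely the role of $C_4[S^1]$: blowing up the collision diagonals desingularizes the evaluation map, the defining equations must be rescaled so as to extend smoothly to the boundary faces, and one must verify that for generic $\gamma'$ no square-like quadrilaterals run off to the boundary as ``vanishingly small'' solutions, while the boundary strata contribute the controlled, universal amount that fixes the parity. Carrying out this boundary bookkeeping, together with checking that the equal-side and equal-diagonal differentials are independent at a nondegenerate square-like quadrilateral so that $\slq$ genuinely has codimension $4$, is where the real work lies.
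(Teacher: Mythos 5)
This theorem is not proved in the present paper at all---it is quoted from Theorem~35 of \cite{Slq}---and the method of that cited proof is precisely what you outline: recast inscribed square-like quadrilaterals as zeros of an evaluation map on the compactified configuration space $C_4[S^1]$, apply multijet transversality to obtain a residual (hence $C^m$-dense for all $m$) set of embeddings with transverse, finite solution sets, and fix the odd parity by a mod-2 cobordism argument whose validity depends on controlling the collision boundary faces. Your proposal is therefore essentially the same approach as the paper's source, and you correctly locate the substantive work in the boundary analysis of the compactification.
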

  
In order to fully appreciate \thm{squarepeg}, we recall the topology of the spaces we are working in. In general, for manifolds $M$ and $N$, the space $C^\infty(M,N)$ has the Whitney $C^\infty$-topology (see for instance \cite{Hirsch}).   The sets of the form $$\mathcal{N}^m(f; (U,\phi), (V,\psi), \delta)$$ give a subbasis for the Whitney $C^m$-topology on $C^m(M,N)$ (where $m$ is finite). 
This subbasis is the subset of functions $g\colon M\to N$ that are smooth, and for coordinate charts $\phi\colon (U' \subset M) \to (U\subset \R^m)$ and $\psi\colon 
(V' \subset N) \to (V\subset \R^k)$ and $K \subset U$ compact with $g(\phi(K)) \subset V'$, then, for all $s \leq r$, and all $x \in \phi(K)$, 
$$\| D^s (\psi g \phi^{-1})(x) - D^s(\psi f \phi^{-1})(x)\| < \delta. $$
Here, $D^s F$ for a function $F \colon (U\subset \R^m) \to (V \subset \R^k)$ is the $k$-tuple of the $s$th  homogeneous parts of the Taylor series representations of
the projections of $F$.   Finally, the subspace $C^\infty(M,N)$ has the Whitney $C^\infty$-topology by taking the union of all subbases for all $m\geq 0$.

\section{Finite Total Curvature Curves}
\label{sect:ftc}

We recall some standard facts about curves of finite total curvature~(see for instance \cite{math.GT/0606007, MR0176402}). The \emph{total curvature} of a curve is the supremum of the total turning angles of all polygons inscribed in the curve. If this supremum is finite, we say the curve has \emph{finite total curvature} or is in $\FTC$. 
Curves in $\FTC$ have a number of desirable properties. They are always rectifiable, and so can be parametrized by arclength. They are almost everywhere differentiable, and a curve in $\FTC$ has one-sided tangent vectors at every point. In fact, these tangents differ only at countably many corner points. There is a Radon measure $\kappa$ on every $\gamma$ in $\FTC$ whose mass on any open subarc of $\gamma$ is the total curvature (in the above sense) of the subarc. This measure has a countable number of atoms at corners of the curve $\gamma$. The mass of each atom is the turning angle between these vectors. If this turning angle is $\pi$, we say the corner is a \emph{cusp}. 

Since $\FTC$ curves have a second derivative (at least weakly) it is natural to want to approximate them ``in $C^2$'' by smooth curves. Unfortunately, this is not quite possible. Note that the tangent indicatrix to an $\FTC$ curve has gaps at the corners of the curve, while the tangent indicatrix of a smooth curve forms a continuous curve on $S^{n-1}$. Thus the tangent vectors to a sequence of smooth curves approximating an $\FTC$ curve can't converge to tangents of the $\FTC$ curve near a corner of the $\FTC$ curve. However, we can come very close to a $C^2$ approximation in the following sense:

\begin{definition}
Suppose $\gamma\co \R\rightarrow \R^n$ is a curve in $\FTC$. Let $\Len(\gamma,a,b)$   be the length of the arc of $\gamma$ between $\gamma(a)$ and $\gamma(b)$ and $\kappa(\gamma,a,b)$ be the total curvature of this arc. We say that a sequence of finite total curvature curves $\gamma_i\co\R\rightarrow \R^n$ approximate $\gamma$ \emph{uniformly in position, arclength, and total curvature} if there are parametrizations of the $\gamma_i$ so that for each $\epsilon > 0$ there exists an $N$ so that for all $i > N$, we have the following:
\begin{enumerate}
\item For any $a$, $\norm{\gamma_i(a) - \gamma(a)} < \epsilon$.
\item For any $[a,b]$, $\norm{\Len(\gamma_i,a,b) - \Len(\gamma,a,b)} < \epsilon$.
\item For any $[a,b]$, $\norm{\kappa(\gamma_i,a,b) - \kappa(\gamma,a,b)} < \epsilon$.
\end{enumerate}
\label{def:uniform in pos len tc}
\end{definition}

\begin{proposition} 
\label{prop:approx}
Any $\FTC$ curve $\gamma$ may be approximated uniformly in position, arclength, and total curvature by smooth $\FTC$ curves $\gamma_i$.
\end{proposition}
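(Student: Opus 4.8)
The plan is to obtain the $\gamma_i$ by smoothing $\gamma$ against a mollifier and to check the three conditions one at a time, the first two being elementary and the third --- total curvature --- being the crux. First I would reparametrize $\gamma$ by arclength, so that $T:=\gamma'$ is defined almost everywhere, satisfies $\norm{T}=1$, and, as a map to $S^{n-1}$, is of bounded variation; here $\Len(\gamma,a,b)=b-a$ and $\kappa(\gamma,a,b)$ is the total variation of $T$ measured with the angular metric on $S^{n-1}$, jumps at corners counting as turning angles. Fix a smooth, even, log-concave probability density $\phi$ (a truncated and renormalized Gaussian, or the heat kernel on the circle), set $\phi_\delta(t)=\delta^{-1}\phi(t/\delta)$, and define $\gamma_\delta=\gamma*\phi_\delta$. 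Each $\gamma_\delta$ is smooth, hence automatically in $\FTC$, with $\gamma_\delta'=T*\phi_\delta$; taking $\delta=\delta_i\to 0$ gives the sequence. (For $\delta$ small these curves are also embedded, since $\gamma_\delta\to\gamma$ uniformly and $\gamma$ is embedded, though embeddedness is not needed for the statement.)

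Condition~(1) is immediate: since $\gamma$ is arclength parametrized and $\phi_\delta\ge 0$ is concentrated near $0$, $\norm{\gamma_\delta(a)-\gamma(a)}\le\int\phi_\delta(t)\,\norm{\gamma(a-t)-\gamma(a)}\,dt\le\int\phi_\delta(t)\,\abs{t}\,dt\to 0$ uniformly in $a$. For condition~(2), note $\norm{\gamma_\delta'}=\norm{T*\phi_\delta}\le 1$, so $\Len(\gamma_\delta,a,b)\le b-a=\Len(\gamma,a,b)$, and the deficit is controlled uniformly by
$$0\le \Len(\gamma,a,b)-\Len(\gamma_\delta,a,b)=\int_a^b\bigl(1-\norm{T*\phi_\delta}\bigr)\,ds\le\int_{S^1}\bigl(1-\norm{T*\phi_\delta}\bigr)\,ds.$$
Since $T*\phi_\delta\to T$ almost everywhere while $\norm{T*\phi_\delta}\le 1$, dominated convergence sends the right-hand side to $0$, giving condition~(2) uniformly over all subarcs.

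Condition~(3) I would prove at the level of curvature measures. Write $\kappa_\delta=\norm{u_\delta'}\,ds$ for the curvature measure of $\gamma_\delta$, where $u_\delta=\gamma_\delta'/\norm{\gamma_\delta'}$, and $\kappa$ for that of $\gamma$. I would show weak-$*$ convergence $\kappa_\delta\rightharpoonup\kappa$ together with convergence of total masses; the portmanteau theorem then yields $\kappa(\gamma_\delta,a,b)=\kappa_\delta((a,b))\to\kappa((a,b))=\kappa(\gamma,a,b)$ for every $[a,b]$ whose endpoints carry no atom of $\kappa$, that is, for all but the countably many subarcs ending at a corner. The lower bound is lower semicontinuity of total curvature under $C^0$ convergence: for any polygon inscribed in $\gamma$ on $(a,b)$, the polygon on $\gamma_\delta$ with vertices $\gamma_\delta(t_j)\to\gamma(t_j)$ has turning converging to that of the original, so $\liminf_\delta\kappa_\delta((a,b))\ge\kappa((a,b))$ on every open subarc. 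The matching upper bound on the \emph{total} mass I would get from Milnor's integral-geometric formula, which expresses $\kappa(\gamma)$ as a fixed constant times the average over $v\in S^{n-1}$ of the number of critical points of the height function $h_v(s)=\langle\gamma(s),v\rangle$. The height function of $\gamma_\delta$ is $h_v*\phi_\delta$, and because $\phi$ is log-concave, convolution is variation-diminishing, so $h_v*\phi_\delta$ has at most as many critical points (sign changes of $\langle T,v\rangle*\phi_\delta$) as $h_v$ has; integrating over $v$ gives $\kappa(\gamma_\delta)\le\kappa(\gamma)$. With lower semicontinuity this forces $\kappa(\gamma_\delta)\to\kappa(\gamma)$, and lower semicontinuity on all open subarcs together with this mass convergence is exactly the criterion for $\kappa_\delta\rightharpoonup\kappa$.

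I expect this upper bound to be the main obstacle: one must rule out the a priori possibility that smoothing \emph{creates} turning, and the Milnor--variation-diminishing route seems cleanest, since a direct estimate on $\norm{u_\delta'}$ is too lossy (across a corner of angle $\theta$ the speed $\norm{\gamma_\delta'}$ dips to $\cos(\theta/2)$, so the naive bound $\norm{u_\delta'}\le\norm{\gamma_\delta''}/\norm{\gamma_\delta'}$ both loses the factor $1/\cos(\theta/2)$ and replaces angles by chords). A structural point worth flagging is that $(a,b)\mapsto\kappa(\gamma,a,b)$ jumps by $\theta$ as an endpoint crosses a corner of $\gamma$, whereas $\kappa(\gamma_\delta,a,b)$ is continuous; thus condition~(3) is naturally the weak, per-subarc convergence of curvature measures established above rather than a uniform statement at corner endpoints, and it is precisely this convergence of measures that the limiting argument of \sect{curve-pi} consumes.
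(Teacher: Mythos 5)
The crux of your argument---the upper bound $\kappa(\gamma_\delta)\le\kappa(\gamma)$---rests on the claim that ``because $\phi$ is log-concave, convolution is variation-diminishing,'' and that claim is false as stated. By Schoenberg's theorem, convolution with a kernel diminishes the number of sign changes of every function if and only if the kernel is a P\'olya frequency function ($\mathrm{PF}_\infty$, totally positive of all orders); log-concavity is only $\mathrm{PF}_2$, a strictly weaker condition, and truncation destroys higher-order total positivity. In particular your first proposed kernel, a truncated renormalized Gaussian, does not obviously have the property you need: its limiting case, the box kernel, is log-concave but not $\mathrm{PF}_\infty$ (its two-sided Laplace transform $2\sinh(s)/s$ has non-real zeros, so it is not the reciprocal of a Laguerre--P\'olya function), hence convolution with it is \emph{not} variation-diminishing. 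The gap is fixable inside your own framework: the Gaussian/heat kernel genuinely is $\mathrm{PF}_\infty$ (equivalently, Sturm's theorem that heat flow does not increase the number of sign changes), so with $\phi_\delta$ the heat kernel on the circle the chain $\kappa(\gamma_\delta)=\pi\cdot\operatorname{avg}_v S(\langle T,v\rangle * \phi_\delta)\le\pi\cdot\operatorname{avg}_v S(\langle T,v\rangle)\le\kappa(\gamma)$ can be pushed through. But even then you owe real work that the proposal defers: the Crofton/Milnor formula for $\FTC$ (not just smooth) curves, the a.e.-$v$ genericity needed to identify crossings of $v^{\perp}$ with sign changes, and the degeneracy that at a smoothed cusp $T*\phi_\delta$ can vanish, so $u_\delta$ and the density $\norm{u_\delta'}$ are not defined pointwise (Proposition~\ref{prop:approx} is stated for all $\FTC$ curves, cusps included).

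It is worth seeing why the paper never meets this obstacle: its route is dual to yours. The paper inscribes polygons with vertices equally spaced by arclength, so the total-curvature upper bound is \emph{free}---an inscribed polygon has turning at most $\TC(\gamma)$ by the very definition of total curvature as a supremum, and rounding corners afterwards preserves total curvature exactly---while the real analytic input is arclength convergence, supplied by the Fr\'echet-distance inequality $|\Len(K)-\Len(L)|\le\delta(K,L)(\pi\max\{\TC(K),\TC(L)\}+2)$ of \cite{MR0176402}. In your scheme arclength is the free part (your dominated-convergence argument for condition (2) is correct, and uniform over subarcs since the length deficit is monotone in the arc), and all the weight falls on the total-curvature bound, which is exactly where the error sits. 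Finally, your closing structural remark about condition (3) is correct, and in fact applies to the paper's own construction as well: a smooth approximant has an atomless curvature measure, so continuity from above forces an error of at least half the corner angle on arcs ending exactly at a corner, whatever parametrization is chosen. Thus Definition~\ref{def:uniform in pos len tc}(3) can only ever be satisfied in the per-subarc sense, off the countably many atoms---which is all that Proposition~\ref{prop:bounded pi d} uses, since its endpoints $a',b'$ may be chosen to avoid them.
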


\begin{proof}
This is an assembly of standard results about $\FTC$ curves. If we inscribe polygons with vertices equally spaced by arclength in $\gamma$, and parametrize them compatibly (so that the vertices have the same parameter values on $\gamma$ and on each polygon), the polygons converge uniformly in position and total curvature (cf. Lemma~4.2 of \cite{MR3048513}) and are all finite-total curvature curves (since their total curvatures are bounded by that of $\gamma$). 

To see that they converge uniformly in arclength, fix an arc $(a,b)$ of $\gamma$, and observe that the corresponding arcs of the $\gamma_i$ have bounded total curvature, and converge to the arc of $\gamma$ in Fr\'echet distance because they converge in position. Then use Theorem~5.1 of \cite{MR0176402} (see also \cite{MR2352603}) which states that for any rectifiable curves $K$, $L$,
\begin{equation*}
|\Len(K) - \Len(L)| \leq \delta(K,L) (\pi \max\{\TC(K),\TC(L)\} + 2)
\end{equation*}
where $\delta(K,L)$ is the Fr\'echet distance between $K$ and $L$. Note that this theorem is not obvious: it says that the standard examples of curves which converge in Fr\'echet distance but \emph{not} in arclength, such as a stairstep curve converging to the diagonal of a square, must all have unbounded total curvature. 

To finish the proof, smooth each polygon by rounding off corners-- the smooth curves have the same total curvature as the polygons (and are hence $\FTC$) and are close to the original polygons in position, arclength, and total curvature, as required.
\end{proof}
\section{Curvature $\pi$ is required to have an inscribed square-like quadrilateral }
\label{sect:curve-pi}

The definition of total curvature means that the total curvature of any curve $\gamma$, is at least as large as the total curvature of any polygonal curve inscribed in $\gamma$.  We can apply this to curves which have an inscribed square-like quadrilateral $pqrs$.

 Notice that if a square-like quadrilateral  $pqrs$ is inscribed in an arc of $\gamma$, the total curvature of the arc $\arc{pqrs}$ must be at least as large as the total curvature (or total turning angle) of the inscribed open square-like quadrilateral $pqrs$. Here, by {\em open square-like quadrilateral (polygon)}, we mean the open arc $\sigma_{pqrs}$ (or $pqrs$ without side $sp$). We compute the the total curvature of the open square-like quadrilateral $\sigma_{pqrs}$ by computing the turning angle at vertices $q$ and $r$.  When $pqrs$ is a planar square, we see the open square  has total turning angle $\pi$.  We now prove that the turning angle is at least $\pi$ if $pqrs$ is an open square-like quadrilateral.

\begin{lemma} \label{lem:sqrturn} Any square-like quadrilateral  $pqrs$ has the property that the open square-like quadrilateral $\sigma_{pqrs}$ has total curvature $\kappa(\sigma_{pqrs}) \geq \pi$, with equality if and only if $pqrs$ is a planar square.  
\end{lemma}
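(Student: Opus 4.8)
The plan is to reduce the statement to a single inequality relating the common side length and common diagonal length of $pqrs$, and then to establish that inequality with an elementary vector identity valid in every dimension.

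First, write $a$ for the common side length $|pq|=|qr|=|rs|=|sp|$ and $d$ for the common diagonal length $|pr|=|qs|$; we may assume $a>0$. Since $\sigma_{pqrs}$ is the polygonal arc $p\to q\to r\to s$, its total curvature is the sum of the turning (exterior) angles at its two interior vertices $q$ and $r$,
\[
\kappa(\sigma_{pqrs}) = (\pi - \angle pqr) + (\pi - \angle qrs).
\]
The triangles $pqr$ and $qrs$ are isosceles, each with two sides of length $a$ and remaining side of length $d$, so the law of cosines gives $d^2 = 2a^2(1-\cos\angle pqr) = 2a^2(1-\cos\angle qrs)$; hence $\angle pqr = \angle qrs =: \theta$ and $\kappa(\sigma_{pqrs}) = 2(\pi-\theta)$. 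Thus $\kappa(\sigma_{pqrs})\ge \pi$ is equivalent to $\theta\le \pi/2$, i.e.\ to $\cos\theta\ge 0$, i.e.\ to $d^2\le 2a^2$.

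It remains to show $d^2\le 2a^2$, with equality exactly for a planar square. Here I would invoke Euler's quadrilateral identity: for any four points $p,q,r,s$ in $\R^n$,
\[
|pq|^2+|qr|^2+|rs|^2+|sp|^2 = |pr|^2+|qs|^2 + 4|g|^2,
\]
where $g$ is the vector joining the midpoint of the diagonal $pr$ to the midpoint of the diagonal $qs$ (a one-line expansion in position vectors, dimension-independent). For a square-like quadrilateral the left side is $4a^2$ and the diagonal terms contribute $2d^2$, so $4a^2 = 2d^2 + 4|g|^2$, giving $d^2 = 2a^2 - 2|g|^2 \le 2a^2$. This proves the inequality, and hence $\kappa(\sigma_{pqrs})\ge\pi$.

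Finally, equality $d^2 = 2a^2$ holds precisely when $g=0$, i.e.\ when the two diagonals share a midpoint; that condition forces $pqrs$ to be a parallelogram, hence planar. Having all four sides equal then makes it a rhombus, and having equal diagonals makes it a rectangle, so it is a square; conversely a planar square has $\theta=\pi/2$ and $\kappa(\sigma_{pqrs})=\pi$. The one genuinely substantive step is the bound $d\le\sqrt{2}\,a$, which I expect to be the crux — everything else is bookkeeping with the turning angles. The quadrilateral identity is the slick route to it; equivalently one could realize the tetrahedron $pqrs$ as an isosceles disphenoid inscribed in a rectangular box $[-u,u]\times[-v,v]\times[-w,w]$, read off that two opposite-edge pairs have length $a$ and the third has length $d$, deduce $u=v$, $d^2=8u^2$, $a^2=4u^2+4w^2$, and conclude $d^2 = 2a^2-4w^2\le 2a^2$ with equality iff $w=0$.
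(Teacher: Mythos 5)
Your proof is correct, and it reaches the crux by a genuinely different route than the paper. Both arguments share the same skeleton: the total curvature of $\sigma_{pqrs}$ is the sum of the two turning angles at $q$ and $r$, these angles are equal, and the bound $\kappa(\sigma_{pqrs})\geq\pi$ reduces to the single inequality $d^2\leq 2a^2$ between the common diagonal and side lengths. Where you differ is in how that inequality is established. The paper argues synthetically: it drops the midpoint $t$ of the diagonal $qs$, uses the congruence $\triangle rqs\cong\triangle pqs$ to get $pt=rt=\cos\theta$ (with side length normalized to $1$ and half-apex angle $\theta$), and then applies the triangle inequality $pt+tr\geq pr$ to conclude $\cos\theta\geq\sin\theta$, i.e.\ $\theta\leq\pi/4$, with equality exactly when $t$ lies on the segment $pr$, which forces planarity. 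You instead invoke Euler's quadrilateral identity, $\sum(\text{sides})^2=\sum(\text{diagonals})^2+4|g|^2$ with $g$ the vector between the midpoints of the two diagonals, which yields $d^2=2a^2-2|g|^2$ in one line and makes the equality case transparent: $g=0$ forces a parallelogram, hence a planar square. The two approaches are close cousins --- both ultimately exploit midpoints of diagonals --- but yours replaces the figure-dependent trigonometry and triangle inequality with a dimension-independent algebraic identity, which also packages the rigidity statement (equality iff planar square) more cleanly; the paper's version is more elementary and visual, at the cost of a construction and a congruence argument. One small point in your favor worth noting: the identity $d^2=2a^2-2|g|^2$ gives a quantitative defect term, strengthening the lemma to an explicit formula $\kappa(\sigma_{pqrs})=2\pi-4\theta$ with $\cos\theta$ expressed in terms of $|g|$, which the paper's inequality-based argument does not directly provide.
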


\begin{proof} 
Consider the square-like quadrilateral found in Figure~\ref{fig:tetraturning} where $pqrs$ has equal sides $pq$, $qr$, $rs$, and $sq$ and equal diagonals $pr$ and $qs$.
We may assume without loss of generality that the sides have length 1. We construct the midpoint $t$ of $qs$. Since $\triangle pqs$ is isosceles, we can conclude that $\angle ptq$ is right and that $\angle qpt=\angle spt=\theta$.
We then have $pt = \cos \theta$ and $qt = \sin \theta$. Further, since $qs = pr$, we have $pr = 2 \sin \theta$.

\begin{figure}[htbp]
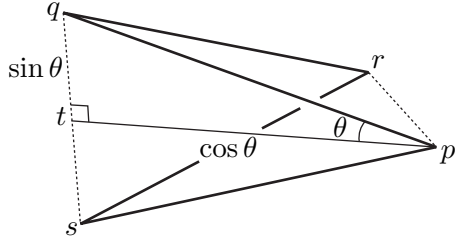

\begin{overpic}{tetraturning}
\put(92.5,22){$p$}
\put(10,52){$q$}
\put(13.8,6){$s$}
\put(78,41){$r$}
\put(42,22.2){$\cos \theta$}
\put(2,39){$\sin\theta$}
\put(12,29){$t$}
\put(70,26.5){$\theta$}
\end{overpic}
\caption{The open arc $\sigma_{pqrs}$ of the square-like quadrilateral $pqrs$ shown has total curvature given by $2\pi - 4\theta$. We observe, however, that $pt$ has length $\cos \theta$ and $qt$ has length $\sin \theta$, while $2qt = qs = pr$ is less than $2pt$. Thus $\cos \theta \geq \sin \theta$ and $\theta \leq \pi/4$.}
\label{fig:tetraturning}
\end{figure} 

Since $pq = rq$,  $ps = rs$, and $qs$ is a common side,  we have $\triangle rqs \cong \triangle pqs$. Thus $\angle qrs = \angle qps = 2 \theta$ and,  after joining $r$ to $t$, we find $rt = \cos \theta$ as above.  So by the triangle inequality (on $\triangle ptr$) we have $pt + tr \geq pr$, or
\begin{equation*}
2 \cos \theta \geq 2 \sin \theta.
\end{equation*}
This means that $\theta \leq \pi/4$, and $\theta = \pi/4$ if and only if $t$ is on the line $pr$. In this case $pqrs$ is planar (and hence it is a square). Now, at vertices $q$ and $r$, the turning angle of $pqrs$ is $\pi - 2\theta$. Thus the total turning angle of the open square-like quadrilateral $\sigma_{pqrs}$ is $2 \pi - 4\theta \geq \pi$, as desired.
\end{proof}

\subsection{On a finite-total curvature curve without cusps, short enough arcs have small curvature}
\label{sect:FTCWC}

In this section we will restrict our attention to curves with finite total curvature, but without cusps. We say such a curve is in $\FTCWC$. Recall that a cusp is a point on a curve with turning angle $\pi$. What is our goal? We will take an embedded curve in $\FTCWC$, and a sequence of smooth FTC embedded curves $\gamma_i$ converging to $\gamma$. We will prove that there exists a constant $c > 0$ so that no square-like quadrilateral inscribed in $\gamma_i$ has sidelength less than $c$. 

\begin{definition}
\label{def:par}
We define the \textbf{$\pi$-distance} of an FTC curve $\gamma$, denoted $\operatorname{\pi-d}(\gamma)$.
The value $\ell$ is an {\it admissible distance bound} if every open subarc $(a,b)$ of $\gamma$ with $\norm{\gamma(a) - \gamma(b)} < \ell$ has $\kappa(\gamma,a,b) < \pi$. Then 
\begin{equation*}
\operatorname{\pi-d}(\gamma) =  \sup_{\ell \text{ is admissible}} \ell  = 
\inf_{\ell \text{ is inadmissible}} \ell.
\end{equation*}
\end{definition}

Note that if $\ell$ is inadmissible, then there is some subarc $(a,b)$ with $\norm{\gamma(a) - \gamma(b)} < \ell$, but $\kappa(\gamma,a,b) \geq \pi$. The point of $\operatorname{\pi-d}(\gamma)$ is that it provides a lower bound on the side length of a square-like quadrilateral  inscribed in $\gamma$.

\begin{lemma}\label{lem:pi d}
Any square-like quadrilateral  inscribed in an FTC curve $\gamma$ has sidelength greater than or equal to $\operatorname{\pi-d}(\gamma)$. 
\end{lemma}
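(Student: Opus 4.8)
The plan is to produce, from any inscribed square-like quadrilateral, a single subarc of $\gamma$ that witnesses the inadmissibility of every distance bound exceeding the sidelength. Let $pqrs$ be a square-like quadrilateral inscribed in $\gamma$ with the vertices occurring in this cyclic order along the curve, and let $L$ denote the common sidelength. I would take the arc $\arc{pqrs}$ of $\gamma$ running from $p$ to $s$ through $q$ and $r$, and choose parameters with $\gamma(a) = p$ and $\gamma(b) = s$, so that $\arc{pqrs}$ is the open subarc $(a,b)$. The chord joining its endpoints is precisely the side $sp$, hence $\norm{\gamma(a) - \gamma(b)} = |sp| = L$.

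Next I would bound the total curvature of this subarc from below. The open polygon $\sigma_{pqrs}$ is inscribed in $\arc{pqrs}$, with its turning concentrated at the interior vertices $q$ and $r$. By the defining property of total curvature recalled at the opening of this section, the total curvature of an arc is at least that of any polygon inscribed in it, so $\kappa(\gamma,a,b) \geq \kappa(\sigma_{pqrs})$; and \lem{sqrturn} gives $\kappa(\sigma_{pqrs}) \geq \pi$. Therefore $\kappa(\gamma,a,b) \geq \pi$.

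The conclusion then follows directly from \defn{par}. For any $\ell > L$, the subarc $(a,b)$ satisfies $\norm{\gamma(a)-\gamma(b)} = L < \ell$ while $\kappa(\gamma,a,b) \geq \pi$, so $\ell$ fails the admissibility test; that is, every $\ell > L$ is inadmissible. Hence the set of inadmissible bounds contains $(L,\infty)$, and so $\operatorname{\pi-d}(\gamma) = \inf_{\ell \text{ inadmissible}}\ell \leq L$, which is exactly the asserted inequality $L \geq \operatorname{\pi-d}(\gamma)$.

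The only point genuinely requiring care — and the main place where the geometry enters — is the identification of the subarc: I use that the vertices occur in cyclic order $p,q,r,s$, so that $q$ and $r$ actually lie on the arc from $p$ to $s$, and I compare the total curvature of the \emph{open} arc with the turning of the \emph{open} polygon $\sigma_{pqrs}$, so that no spurious contribution from the endpoints $p$ and $s$ is counted. Once the chord-length identity $\norm{\gamma(a)-\gamma(b)} = L$ and the curvature bound $\kappa(\gamma,a,b) \geq \pi$ are in hand, the passage to $\operatorname{\pi-d}(\gamma) \leq L$ is a formal consequence of the infimum characterization in \defn{par}; the degenerate case $L = 0$ makes the inequality trivial.
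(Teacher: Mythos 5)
Your proposal is correct and takes essentially the same route as the paper: both identify the arc of $\gamma$ from $p$ to $s$ through $q$ and $r$, bound its total curvature below by $\pi$ using the inscribed open polygon $\sigma_{pqrs}$ together with \lem{sqrturn}, and then conclude via \defn{par}. If anything you are slightly more careful than the paper at the last step: the paper asserts that $\norm{\gamma(p)-\gamma(s)}$ is itself an inadmissible bound, which the strict inequality in \defn{par} does not literally give, whereas you correctly show that every $\ell$ exceeding the sidelength is inadmissible and pass to the infimum (and your explicit hypothesis that the vertices occur along $\gamma$ in the cyclic order $p,q,r,s$ is the same assumption the paper makes implicitly when it treats $\sigma_{pqrs}$ as inscribed in the arc from $p$ to $s$).
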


\begin{proof}
Let $pqrs$ be an inscribed square-like quadrilateral  in $\gamma$, and consider the open polygon $\sigma_{pqrs}$ which has end-to-end distance $\norm{\gamma(p) - \gamma(s)}$. By Lemma~\ref{lem:sqrturn}, the square-like quadrilateral  is an inscribed open polygon with total curvature at least $\pi$. Thus $\kappa(\gamma,p,s) \geq \pi$. This means that $\norm{\gamma(p) - \gamma(s)}$ is an inadmissible distance bound, and hence it is at least $\operatorname{\pi-d}(\gamma)$, as desired.
\end{proof}

We now want to show that an embedded curve in $\FTCWC$ is the limit of a sequence of smooth curves with inscribed square-like quadrilaterals with side lengths uniformly bounded above zero. We proceed in two steps: first we will show that $\gamma$ itself has $\operatorname{\pi-d}$ bounded above, then that $\operatorname{\pi-d}$ behaves nicely under the sort of convergence of curves we introduced above in Definition~\ref{def:uniform in pos len tc}.

\begin{lemma}
\label{lem:padgzero}
If $\gamma$ is an embedded curve in $\FTCWC$, then $\operatorname{\pi-d}(\gamma) > 0$.
\end{lemma}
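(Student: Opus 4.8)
The plan is to produce an explicit positive admissible distance bound $\ell$ by combining two ingredients: a purely local consequence of the no-cusp hypothesis (short arcs, measured in \emph{arclength}, carry total curvature strictly below $\pi$), and the embeddedness of $\gamma$ (spatially close endpoints are joined by a short arc). Throughout I use that $\gamma$ is a closed curve, so its arclength parameter lives on a compact circle, and that the total curvature of a subarc is the mass $\kappa(\gamma,a,b)$ of the finite Radon measure $\kappa$ described in \sect{ftc}.

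\emph{Step 1: arclength-short arcs have curvature below $\pi$.} I claim there is a $\delta > 0$ so that $\Len(\gamma,a,b) \le \delta$ forces $\kappa(\gamma,a,b) < \pi$. Suppose not; then there are subarcs with $\Len \to 0$ but $\kappa(\gamma,a,b) \ge \pi$. By compactness of the parameter circle, after passing to a subsequence the endpoints converge to a single parameter value $c$ (they must coincide in the limit because the arclengths vanish). For every $\eta > 0$ these arcs eventually sit inside the open arc $(c-\eta, c+\eta)$, so $\limsup \kappa(\gamma, a, b) \le \kappa\big((c-\eta,c+\eta)\big)$; letting $\eta \to 0$ and using continuity from above of the finite measure $\kappa$ gives $\limsup \kappa(\gamma,a,b) \le \kappa(\{c\})$. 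But $\kappa(\{c\})$ is exactly the turning angle at $c$, which is strictly less than $\pi$ since $\gamma$ has no cusps. This contradicts $\kappa(\gamma,a,b) \ge \pi$, proving the claim.

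\emph{Step 2: spatially close points are joined by an arclength-short arc.} Using embeddedness, set $d_0 = \inf\{\, \norm{\gamma(a)-\gamma(b)} : \text{the shorter subarc between } a \text{ and } b \text{ has length} \ge \delta \,\}$. This infimum is over a compact set of parameter pairs that avoids the diagonal, and injectivity of $\gamma$ makes the (continuous, positive) integrand bounded below, so $d_0 > 0$. Consequently $\norm{\gamma(a)-\gamma(b)} < d_0$ forces the shorter subarc joining $a$ and $b$ to have length $< \delta$, and then Step 1 gives $\kappa(\gamma,a,b) < \pi$ for that arc. Hence $\ell = d_0$ is an admissible distance bound in the sense of \defn{par}, and $\operatorname{\pi-d}(\gamma) \ge d_0 > 0$.

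\emph{Main obstacle.} The delicate point is that the no-cusp hypothesis only yields $\kappa(\{c\}) < \pi$ at each corner, with no uniform gap: turning angles may approach $\pi$ along a sequence of corners, so a naive attempt to bound curvature on a fixed-size arc uniformly fails. One must argue by the compactness/limiting scheme above, in which the offending arcs are forced to concentrate at a single point whose atom is controlled, and the continuity-from-above step is precisely what discounts the mass of nearby but distinct corners as the arcs shrink. A secondary point worth flagging is that the argument is genuinely about the \emph{shorter} connecting arc: the long arc wrapping around $\gamma$ can carry total curvature near the (large) total curvature of the whole curve, so it is the embeddedness bound of Step 2 — which controls exactly the shorter arc — that keeps the relevant arcs short, matching the way \lem{pi d} applies $\operatorname{\pi-d}$ to the short arc $\sigma_{pqrs}$ of a small square-like quadrilateral.
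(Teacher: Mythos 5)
Your proof is correct, and its engine is the same as the paper's: force the offending arcs to concentrate at a single parameter value, then use regularity of the finite curvature measure (your ``continuity from above'' is the paper's ``outer regularity'') together with the no-cusp hypothesis to cap the limiting mass strictly below $\pi$. The difference is in the packaging. The paper runs a single proof by contradiction: inadmissible bounds $\ell_i \to 0$ give arcs with chord lengths tending to $0$ and $\kappa \geq \pi$; embeddedness forces the limiting endpoints to coincide, the arcs shrink to a point $p$, and outer regularity gives $\kappa(p) \geq \pi$, i.e.\ a cusp. You instead factor this into a purely local lemma (Step 1: there is $\delta>0$ so that arcs of arclength at most $\delta$ have curvature $<\pi$ --- no embeddedness needed) plus a global compactness lemma (Step 2: embeddedness bounds the chord length from below over pairs whose shorter arc has length at least $\delta$), producing an explicit admissible bound $d_0$ rather than a contradiction; this isolates exactly where embeddedness enters and is more quantitative. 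Your decomposition also makes explicit a subtlety the paper elides: read literally, \defn{par} quantifies over \emph{all} open subarcs, and the long arc joining two nearby points of a closed curve typically has curvature $\geq \pi$ (total curvature of a closed curve is at least $2\pi$), so the lemma is only true under the ``shorter/shrinking arc'' reading --- the paper's step ``pass to a subsequence of $A_i$ that are nested and converge to a point'' tacitly assumes exactly this. One caution about your closing remark, however: the arc $\sigma_{pqrs}$ in \lem{pi d} runs from $p$ through $q$ and $r$ to $s$, and nothing forces it to be the \emph{shorter} of the two arcs joining $p$ and $s$, so your Step 2 does not automatically ``match'' that lemma's use of the definition; the short-arc reading that makes your (and the paper's) proof of the present lemma work is in genuine tension with the proof of \lem{pi d}, but that is an issue with the paper's definition, not a gap in your argument for this statement.
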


\begin{proof} 
Suppose not. Since $\operatorname{\pi-d}(\gamma) = 0$, there is a sequence of inadmissible $\ell_i \rightarrow 0$. So there exists a collection of open subarcs $A_i$ of $\gamma$ whose endpoints $a_i$, $b_i$ have $\norm{\gamma(a_i) - \gamma(b_i)} \rightarrow 0$, while $\kappa(\gamma,a_i,b_i) \geq \pi$. Passing to a subsequence where $a_i \rightarrow a$ and $b_i \rightarrow b$, we see that $\gamma(a) = \gamma(b)$, and hence $a = b$ because $\gamma$ is embedded. 

Now as the $A_i$ approach $\{a\}$, their total curvature $\kappa(A_i) \geq \pi$. Since $\gamma$ is compact, we may pass to a subsequence of $A_i$ that are nested and converge to a point $p$. Since $\kappa$ is an outer-regular measure, this means that $\kappa(p) \geq \pi$. Since $\kappa(p)$ is a turning angle, it is always $\leq \pi$. Thus $\kappa(p) = \pi$ and $p$ is a cusp point, contradicting our assumption that $\gamma$ was in $\FTCWC$.
\end{proof}

Since $\operatorname{\pi-d}$ is defined by lengths, distances, and curvatures, we can expect it to behave nicely as we take limits in the sense of Definition~\ref{def:uniform in pos len tc}.

\begin{proposition}
If $\gamma_i \rightarrow \gamma$ uniformly in position, arclength, and total curvature in the sense of Definition~\ref{def:uniform in pos len tc}, and $\operatorname{\pi-d}(\gamma) > 0$, then $ \liminf_{i\rightarrow \infty} \operatorname{\pi-d}(\gamma_i) > 0$.
\label{prop:bounded pi d}
\end{proposition}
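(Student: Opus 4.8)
The plan is to argue by contradiction. Suppose $\liminf_i \operatorname{\pi-d}(\gamma_i) = 0$. Passing to a subsequence, I may assume $\operatorname{\pi-d}(\gamma_i) \to 0$. By \defn{par}, for each $i$ there is an open subarc $(a_i,b_i)$ of $\gamma_i$ whose end-to-end distance $d_i := \norm{\gamma_i(a_i) - \gamma_i(b_i)}$ satisfies $d_i < \operatorname{\pi-d}(\gamma_i) + 1/i \to 0$, while $\kappa(\gamma_i,a_i,b_i) \geq \pi$. These ``bad arcs'' are the objects I want to push forward to the limit curve $\gamma$; the goal is to show that they force $\gamma$ to have a cusp, contradicting that $\operatorname{\pi-d}(\gamma) > 0$.

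The heart of the argument is to transfer the two relevant quantities—end-to-end distance and total curvature—from $\gamma_i$ to $\gamma$ using the convergence of \defn{uniform in pos len tc}. Fixing the parametrizations guaranteed by that definition, set $\delta_i := \sup_{[a,b]} \norm{\kappa(\gamma_i,a,b) - \kappa(\gamma,a,b)}$ and $\epsilon_i := \sup_a \norm{\gamma_i(a) - \gamma(a)}$, both of which tend to $0$. Then the \emph{same} parameter interval $[a_i,b_i]$ cuts out an arc of $\gamma$ with $\norm{\gamma(a_i) - \gamma(b_i)} \leq d_i + 2\epsilon_i \to 0$ and $\kappa(\gamma,a_i,b_i) \geq \kappa(\gamma_i,a_i,b_i) - \delta_i \geq \pi - \delta_i$. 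Thus $\gamma$ itself carries a sequence of arcs whose end-to-end distances tend to $0$ and whose total curvatures tend to (at least) $\pi$. Note that only position convergence and total-curvature convergence are used here; arclength convergence plays no role in this step.

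The main obstacle is precisely that the transferred curvature bound is only $\pi - \delta_i$, which is strictly below $\pi$, so no single transferred arc makes a distance bound inadmissible and we cannot conclude $\operatorname{\pi-d}(\gamma)=0$ outright. This gap is closed by a compactness-and-regularity argument in the spirit of the proof of \lem{padgzero}. I extract convergent endpoints $a_i \to a$, $b_i \to b$; position convergence together with continuity of $\gamma$ gives $\gamma(a) = \gamma(b)$. Because $\operatorname{\pi-d}(\gamma) > 0$ forces $\gamma$ to be embedded (see below), we get $a = b$, so the arcs shrink to a single point $p = \gamma(a)$. Then for every open neighborhood $U$ of $p$ we have $(a_i,b_i) \subset U$ for large $i$, hence $\kappa(\gamma,a_i,b_i) \leq \kappa(U)$; letting $i \to \infty$ gives $\pi \leq \kappa(U)$ for all such $U$, and outer regularity of the curvature measure $\kappa$ yields an atom $\kappa(\{p\}) \geq \pi$. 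As a turning angle this atom is at most $\pi$, so it equals $\pi$ and $p$ is a cusp. But a cusp produces subarcs of arbitrarily small end-to-end distance with curvature $\geq \pi$, forcing $\operatorname{\pi-d}(\gamma) = 0$ and contradicting the hypothesis.

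I expect the genuinely delicate bookkeeping to be guaranteeing that the transferred arcs concentrate at a point rather than limiting onto a nondegenerate ``long'' loop, i.e. justifying $a = b$. The clean way to handle this is to record first that $\operatorname{\pi-d}(\gamma) > 0$ forces $\gamma$ to be embedded: any self-intersection $\gamma(a) = \gamma(b)$ with $a \neq b$ bounds a returning arc whose closure is a closed curve of total curvature at least $2\pi$, while the closing corner contributes at most $\pi$, so the arc itself has total curvature at least $\pi$ and zero end-to-end distance, again forcing $\operatorname{\pi-d}(\gamma) = 0$. Once $\gamma$ is known to be embedded, uniform continuity of $\gamma^{-1}$ on the compact image $\gamma(S^1)$ makes small end-to-end distance imply small parameter separation, so $a = b$ and the arcs indeed shrink to the point $p$, completing the contradiction.
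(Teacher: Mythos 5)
Your strategy---transfer each bad arc to $\gamma$ and then force a cusp by concentration---is genuinely different from the paper's, and it has a real gap at exactly the step you flag as delicate. After extracting $a_i \to a$, $b_i \to b$ and concluding $\gamma(a)=\gamma(b)$, you argue that embeddedness plus uniform continuity of $\gamma^{-1}$ gives ``small end-to-end distance implies small parameter separation, so $a=b$ and the arcs shrink to the point $p$.'' But closeness of the \emph{endpoints} (in space, or in the circle metric on parameters) does not make the \emph{subarc} $(a_i,b_i)$ small: the inadmissibility-certifying arc may be the complementary, ``long'' arc between two nearby points, whose parameter length is close to the full period. Nothing in \defn{par} or in your construction rules this out, and such arcs arise very easily---for a round circle, every near-full arc has curvature close to $2\pi \geq \pi$ with endpoints arbitrarily close together. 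In that case the arcs $(a_i,b_i)$ converge to the whole curve minus a point, the containment $(a_i,b_i)\subset U$ for small neighborhoods $U$ of $p$ fails, and the outer-regularity/atom argument collapses: there is no cusp to be found. (This case can be patched---pass to nested complementary arcs shrinking to a point $q$, write the curvature of the long arc as the total curvature of $\gamma$ minus that of the complementary closed arc, and use Fenchel's theorem together with $\kappa(\{q\})\leq \pi$ to certify inadmissible arcs directly---but that argument is absent, and it is the heart of the matter. Note also that the proof of \lem{padgzero}, which you model this on, silently assumes the same concentration.)

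The paper's own proof of \prop{bounded pi d} sidesteps all of this with a different mechanism, which is why it needs neither embeddedness, nor cusps, nor measure theory, nor Fenchel. Instead of transferring each arc individually (which, as you correctly note, only yields $\kappa \geq \pi - \delta_i$ and never reaches $\pi$), it fixes one slightly \emph{expanded} parameter interval $(a',b') \supset (a,b)$ on the limit curve with $\norm{\gamma(a')-\gamma(b')} \leq 3\epsilon$, chosen so that $(a_i,b_i) \subset (a',b')$ for all large $i$, and applies curvature convergence to that single fixed arc: for every $\delta>0$,
\begin{equation*}
\kappa(\gamma,a',b') > \kappa(\gamma_i,a',b') - \delta \geq \kappa(\gamma_i,a_i,b_i) - \delta \geq \pi - \delta,
\end{equation*}
hence $\kappa(\gamma,a',b') \geq \pi$ exactly, so $3\epsilon$ is inadmissible and $\operatorname{\pi-d}(\gamma) \leq 3\epsilon$ for every $\epsilon$. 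Because the quantifiers are arranged this way (fixed arc first, then $\delta \to 0$), the ``$\pi-\delta$'' defect disappears without any need for the limit interval $(a,b)$ to degenerate to a point, so the long-arc scenario causes no trouble. Your auxiliary observation that $\operatorname{\pi-d}(\gamma)>0$ forces $\gamma$ to be embedded is correct for genuine self-intersections, but it is machinery the statement never requires.
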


\begin{proof}
Suppose not. For any $\epsilon > 0$, there must be infinitely many $\gamma_i$ with $\operatorname{\pi-d}(\gamma_i) < \epsilon$. Each $\gamma_i$ contains a subarc $(a_i,b_i)$ with $\norm{\gamma_i(a_i) - \gamma_i(b_i)} < \epsilon$, but $\kappa(\gamma_i,a_i,b_i) \geq \pi$. By compactness, we can assume that we have passed to a  subsequence where $a_i \rightarrow a$ and $b_i \rightarrow b$. 

Now by convergence in position, $\norm{\gamma(a) - \gamma(b)} \leq \epsilon$. Let us expand the open arc $(a,b)$ of $\gamma$ slightly to an open subarc $(a',b')$ with $\norm{\gamma(a') - \gamma(b')} \leq 3 \epsilon$, say, and again pass to a subsequence where $(a_i,b_i) \subset (a',b')$ for all $i$. Now for any $\delta > 0$, by convergence in total curvature, for large enough $i$ we have
\begin{equation*}
\norm{\kappa(\gamma,a',b') - \kappa(\gamma_i,a',b')} < \delta
\end{equation*}
so
\begin{equation*}
\kappa(\gamma,a',b') > \kappa(\gamma_i,a',b') - \delta \geq \kappa(\gamma_i,a_i,b_i) - \delta \geq \pi - \delta.
\end{equation*}
where $\kappa(\gamma_i,a',b') \geq \kappa(\gamma_i,a_i,b_i)$ because $(a_i,b_i) \subset (a',b')$. Since $\delta$ was arbitrary, this proves that $\kappa(\gamma,a',b') \geq \pi$. 

However, this means that $3 \epsilon > \norm{\gamma(a') - \gamma(b')}$ is an inadmissible distance bound for $\gamma$, and hence that $\operatorname{\pi-d}(\gamma) < 3 \epsilon$. Since $\epsilon$ was arbitrary, this proves that $\operatorname{\pi-d}(\gamma) = 0$, providing the required contradiction.
\end{proof}

We are ready to construct an inscribed square-like quadrilateral  on any $\FTCWC$ curve. We have done all the hard work above; it remains only to assemble the component pieces.

\begin{theorem}
\label{thm:FTCWC}
Let $\gamma\co S^1\hookrightarrow \R^n$ be an embedding of $S^1$ in $\R^n$. If $\gamma$ is in $\FTCWC$, then $\gamma$ has an inscribed square-like quadrilateral. 
\end{theorem}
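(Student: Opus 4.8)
The plan is purely to assemble the pieces established above: I will build a sequence of smooth embeddings converging to $\gamma$ in the sense of \defn{uniform in pos len tc}, each carrying an inscribed square-like quadrilateral, and then pass to a subsequential limit whose nondegeneracy is forced by the $\pi$-distance bounds.

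First I would manufacture the approximating sequence. By \prop{approx}, $\gamma$ is a limit, uniformly in position, arclength, and total curvature, of smooth $\FTC$ curves $\gamma_i$. These need not carry any inscribed square-like quadrilateral, so I would correct each one using \thm{squarepeg}, which furnishes, arbitrarily close to the smooth embedding $\gamma_i$ in the $C^2$-topology (by the $C^2$-density of the relevant set within a $C^\infty$-neighborhood of $\gamma_i$), an embedding $\gamma_i'$ carrying an odd, in particular nonempty, finite set of inscribed square-like quadrilaterals; I would take $\gamma_i'$ within $1/i$ of $\gamma_i$ in $C^2$. Because $\gamma_i$ is a regular smooth curve on the compact domain $S^1$, a sufficiently small $C^2$-perturbation stays regular and changes positions, subarc lengths, and subarc total curvatures by a uniformly small amount; combined with $\gamma_i \to \gamma$ this yields $\gamma_i' \to \gamma$ again uniformly in position, arclength, and total curvature.

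Next I would install the lower bound on sidelengths, which is the crux of the whole scheme. Since $\gamma$ lies in $\FTCWC$, \lem{padgzero} gives $\operatorname{\pi-d}(\gamma) > 0$, and \prop{bounded pi d} applied to $\gamma_i' \to \gamma$ then yields $\liminf_{i} \operatorname{\pi-d}(\gamma_i') > 0$. Hence there is a constant $c > 0$ with $\operatorname{\pi-d}(\gamma_i') \geq c$ for all large $i$. Fixing one inscribed square-like quadrilateral $p_i q_i r_i s_i$ of $\gamma_i'$ for each such $i$, \lem{pi d} guarantees its common sidelength is at least $\operatorname{\pi-d}(\gamma_i') \geq c$.

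Finally I would take the limit. The four vertex parameters live in the compact $S^1$, so after passing to a subsequence they converge, and by convergence in position their images converge to points $p,q,r,s$ on $\gamma$. The six pairwise distances depend continuously on position, so the defining equalities $|pq|=|qr|=|rs|=|sp|$ and $|pr|=|qs|$ survive in the limit and the common sidelength is at least $c > 0$. The one point that genuinely requires care --- and what I expect to be the main, if modest, obstacle --- is verifying that the limit is a \emph{nondegenerate} quadrilateral rather than a doubled segment, which the bare definition of square-like quadrilateral would otherwise permit. The sidelength bound already keeps consecutive vertices at distance at least $c$; to exclude a collapse of a diagonal, say $p = r$, I would use that the vertices occur in this cyclic order along the embedded curve, so that $p = r$ forces one of the two parameter-arcs joining them to shrink to a point, squeezing its interior vertex ($q$ or $s$) onto $p$ and thereby forcing an adjacent side to vanish, contradicting the bound $c$. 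With the four vertices therefore distinct, $pqrs$ is an honest inscribed square-like quadrilateral of $\gamma$.
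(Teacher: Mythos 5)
Your proposal is correct and follows essentially the same route as the paper: approximate via \prop{approx}, perturb each approximant $C^2$-slightly using \thm{squarepeg}, bound the sidelengths below by combining \lem{padgzero}, \prop{bounded pi d} and \lem{pi d}, then extract a convergent subsequence of quadrilaterals. The only substantive difference is at the last step, and it is to your credit: the paper simply asserts that the inscribed square-like quadrilaterals lie in a compact subset of $\slq$, so that the limit is again a square-like quadrilateral, while you correctly notice that the sidelength bound alone does not exclude the doubled-segment degeneration $p=r$, $q=s$ --- a square-like quadrilateral can flatten onto a segment with all four sides bounded below while both diagonals collapse (in the notation of \lem{sqrturn}, let $\theta \to 0$). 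Your exclusion of this degeneration relies on the vertices occurring along the curve in the cyclic order $p,q,r,s$, which you assert rather than prove; note, however, that the paper makes the same implicit assumption in \lem{pi d}, where the open polygon $\sigma_{pqrs}$ is treated as inscribed, in the order-preserving sense needed for the total-curvature comparison, in the arc of $\gamma$ from $p$ to $s$. So your argument is at least as rigorous as the paper's, and more explicit about where the remaining subtlety lies.
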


\begin{proof}
First, we may approximate $\gamma$ by a sequence of smooth FTC curves $\gamma_i$ with convergence in position, arclength, and total curvature by Proposition~\ref{prop:approx}.  Note that since $\gamma_i$ are smooth, they are automatically in $\FTCWC$.
By making a $C^2$-small perturbation of each $\gamma_i$, we may assume by Theorem~\ref{thm:squarepeg} that each $\gamma_i$ contains at least one inscribed square-like quadrilateral\footnote{\thm{squarepeg} says there is a $C^\infty$-open neighborhood of each $\gamma_i$ in which there is, for all $m$ (in particular $m=2$), a $C^m$-dense set of embeddings, each with an odd number of  square-like quadrilaterals.}. Since our perturbations were $C^2$-small, the sequence of curves $\gamma_i$ still enjoys finite total curvature and converges to $\gamma$ in position, arclength, and total curvature.

By Lemma~\ref{lem:padgzero} and Proposition~\ref{prop:bounded pi d}, there is a constant $c>0$ so that we may pass to a subsequence of $\gamma_i$, each of which has $\operatorname{\pi-d}(\gamma_i) > c$. By Lemma~\ref{lem:pi d} the inscribed square-like quadrilateral  on each $\gamma_i$ has sidelength at least $c$. This is the crucial point in the proof: by bounding the sidelengths of these square-like quadrilaterals from below, we have ensured that they do not shrink away as we approach the limiting curve $\gamma$. 

From here, the argument is standard. We may assume that the inscribed square-like quadrilaterals in the $\gamma_i$ lie in a compact subset of  $\slq$, and hence that they have a convergent subsequence. The limit of this subsequence is a square-like quadrilateral  inscribed in the limit curve $\gamma$. 
\end{proof}

Since $C^2$ smooth curves are in $\FTCWC$, we immediately find:

\begin{corollary}\label{cor:C2-smooth}
Let $\gamma\co S^1\hookrightarrow \R^n$ be an embedding of $S^1$ in $\R^n$. If $\gamma$ is $C^2$-smooth, then  $\gamma$ has an inscribed square-like quadrilateral.
\end{corollary}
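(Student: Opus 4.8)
The plan is to reduce the statement entirely to \thm{FTCWC}. That theorem already delivers an inscribed square-like quadrilateral for every $\FTCWC$ embedding, so the only thing that needs proof here is the single inclusion: every $C^2$-smooth embedding of $S^1$ in $\R^n$ belongs to $\FTCWC$. Once this inclusion is established, the corollary follows by one application of the theorem.

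First I would verify finite total curvature. Since $\gamma$ is a $C^2$ map on the compact manifold $S^1$, it is rectifiable (indeed $C^1$), and its unsigned curvature function is continuous, hence bounded, say by some $M$. The total curvature in the sense used here is the supremum over inscribed polygons of their total turning angle; for a $C^2$ curve each such turning angle is controlled by the integral of the curvature over the corresponding arcs, so this supremum is at most $M \cdot \Len(\gamma) < \infty$. Thus $\gamma \in \FTC$. Next I would rule out cusps. Because $\gamma$ is $C^2$ it is in particular $C^1$, so it has a genuine two-sided unit tangent vector at every point, and the one-sided tangents coincide everywhere. Consequently the Radon measure $\kappa$ associated to $\gamma$ has no atoms at all: there are no corner points, and in particular no point with turning angle $\pi$. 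Hence $\gamma$ has no cusps and lies in $\FTCWC$.

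Finally, I would apply \thm{FTCWC} to the $\FTCWC$ embedding $\gamma$ to produce the desired inscribed square-like quadrilateral. I do not anticipate a genuine obstacle: the entire content is the inclusion $C^2 \subset \FTCWC$, and its two ingredients are standard facts about smooth curves, namely that the combinatorial definition of total curvature (the supremum of inscribed polygonal turning angles) agrees with the finite integral of curvature for a $C^2$ curve, and that $C^1$ regularity forecloses any atoms in the curvature measure $\kappa$. The mild subtlety, if any, is simply bookkeeping to confirm that a curve with a continuous tangent field genuinely has an atom-free $\kappa$, which follows directly from the description of $\kappa$ recalled in \sect{ftc}.
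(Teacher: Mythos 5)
Your proposal is correct and follows exactly the route the paper takes: the paper derives the corollary immediately from \thm{FTCWC} by observing that $C^2$-smooth curves lie in $\FTCWC$, which is precisely the inclusion you verify (finite total curvature from bounded, integrable curvature; absence of cusps from the everywhere-continuous tangent). The only difference is that the paper treats this inclusion as a standard fact requiring no elaboration, whereas you spell out its two ingredients---a harmless and accurate expansion.
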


Since square-like quadrilaterals are squares in the plane, then we have also shown that all embeddings of the circle in the plane which are in $\FTCWC$ or which are $C^2$-smooth have an inscribed square. 

Note that we have lost something here. Our proof assumed that the limiting curves $\gamma_i$ had an odd number of square-like quadrilaterals. It is entirely possible that multiple square-like quadrilaterals coincide on the limiting curve $\gamma$, so the count of inscribed square-like quadrilaterals may no longer be odd.  Indeed there are explicit examples~\cite{2008arXiv0810.4806P, MR2798015, vanheijst2014algebraic} which show that for any $n$, there are curves with $n$ inscribed squares.

Also note that there exist smooth curves that are not FTC; these do not have corners but have spirals where curvature diverges. For these curves, Theorem~\ref{thm:squarepeg} still holds, but we can not conclude from Theorem~\ref{thm:FTCWC} that there is at least one square-like quadrilateral. (The spirals prevent the arguments of Proposition~\ref{prop:bounded pi d} from holding.)

Finally, we still do not know if Conjecture~\ref{conj:slq} holds for continuous embeddings of circles in $\R^n$. We have the intuition that if the square-peg problem holds true for continuous Jordan curves, then Conjecture~\ref{conj:slq} will hold true for continuous embeddings as well. Proving either conjecture needs new techniques which have yet to be developed.


\section*{Acknowledgements}
The authors would like to first thank Gerry Dunn who introduced us to the problem. We would also like to thank the people who have discussed the problem with us over the years: Jordan Ellenberg, Richard Jerrard, Rob Kusner, Benjamin Matschke,  Igor Pak, Strashimir Popvassiliev, John M. Sullivan, Cliff Taubes, and Gunter Ziegler.


\bibliography{ftcwc}{}
\bibliographystyle{plain}


\appendix
\section{A brief history of the square-peg problem}
\label{sect:history}

Let us recall the {\em square-peg problem}, originally due to O.~Toeplitz~\cite{Toeplitz}.

\begin{conjecture} Let $\gamma\co S^1\hookrightarrow \R^2$ be a Jordan curve (a continuous, simple, closed curve in plane). Then $\gamma(S^1)$ has an inscribed square.
\label{conj:squarepeg}
\end{conjecture}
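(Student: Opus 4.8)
The plan is to imitate the approximation-and-limit strategy that drives \thm{FTCWC}, but now starting from an arbitrary continuous Jordan curve $\gamma\co S^1\hookrightarrow\R^2$. First I would approximate $\gamma$ uniformly in position by a sequence of smooth embeddings $\gamma_i$; such $C^0$-approximations always exist for a Jordan curve. By \thm{squarepeg}, whose planar instance is the classical statement that generic smooth Jordan curves carry an odd (in particular nonzero) number of inscribed squares, I may after an arbitrarily small perturbation assume each $\gamma_i$ contains at least one inscribed square-like quadrilateral $Q_i$. Since a square-like quadrilateral in $\R^2$ is a genuine square (as observed after \defn{slq}), the $Q_i$ form a sequence of honest inscribed squares sitting on curves that converge uniformly to $\gamma$.

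Second, I would extract a limiting square. The four vertices of each $Q_i$ lie on $\gamma_i$, hence within the $C^0$-error of $\gamma$, and the four vertex parameters lie in the compact manifold $S^1$; passing to a subsequence, these parameters converge, and uniform convergence of the $\gamma_i$ forces the four limiting points to lie on $\gamma$. The equal-side and equal-diagonal conditions defining $\slq$ are closed, so the limit configuration is again a square-like quadrilateral, i.e. a square inscribed in $\gamma$ — \emph{provided the limit is nondegenerate}. If the side length stays bounded below, this produces a true inscribed square and resolves Conjecture~\ref{conj:squarepeg}.

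The main obstacle is exactly the degeneration flagged in the introduction: the side lengths of the $Q_i$ may tend to zero, so the limiting configuration collapses to a single point of $\gamma$ and yields only a trivial square. This is precisely what \lem{padgzero} and \prop{bounded pi d} rule out in the $\FTCWC$ setting, by supplying a uniform lower bound $\operatorname{\pi-d}(\gamma)>0$ from the local curvature hypothesis. A continuous Jordan curve offers no such leverage: it need not be rectifiable or of finite total curvature, so \prop{approx} and the entire $\operatorname{\pi-d}$ machinery are unavailable, and there is no local mechanism forcing the curvature on short chords to exceed $\pi$. The honest content of this plan is therefore that I would need a \emph{global}, purely $C^0$ lower bound on the side length of some inscribed square of each $\gamma_i$, stable under uniform limits — replacing $\operatorname{\pi-d}$ by some genuinely metric invariant of the embedding. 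As the introduction records, no such bound is known, nor is any family of curves forcing all inscribed squares to shrink; constructing either has resisted a century of effort. Absent a new invariant of this kind, the argument stalls exactly at the nondegeneracy step, which is where every known approach to Conjecture~\ref{conj:squarepeg} currently halts.
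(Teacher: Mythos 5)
The statement you were asked about is Toeplitz's square-peg conjecture itself, and the paper does \emph{not} prove it --- it is presented (both in Section~\ref{sect:intro} and in Appendix~\ref{sect:history}) as an open problem, of which the paper's Theorem~\ref{thm:FTCWC} resolves only the special case of curves of finite total curvature without cusps. So there is no proof in the paper to compare yours against, and your proposal, as you yourself say plainly in the final paragraph, is not a proof either: it is an accurate diagnosis of why the natural approximation-and-limit strategy fails for general continuous Jordan curves. Your account of that failure is correct and matches the paper's own discussion. The compactness step does produce a limiting configuration satisfying the closed conditions defining $\slq$, but nothing prevents the side lengths of the squares $Q_i$ from tending to zero, in which case the limit is a single point and carries no information. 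The paper's entire contribution is the construction of the local invariant $\operatorname{\pi-d}(\gamma)$ (Definition~\ref{def:par}), which bounds side lengths below via Lemmas~\ref{lem:sqrturn}, \ref{lem:pi d}, \ref{lem:padgzero} and Proposition~\ref{prop:bounded pi d}; every one of those steps uses finite total curvature and the absence of cusps, and none of them survives for a merely continuous (possibly non-rectifiable) curve.

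In short: you have not found a gap that can be repaired, you have correctly located the open problem. One small caution about an implicit step in your first paragraph: invoking Theorem~\ref{thm:squarepeg} requires a $C^\infty$-small (in particular $C^2$-small) perturbation of each smooth approximant $\gamma_i$, which is harmless for your purposes since you only use $C^0$-closeness to $\gamma$ afterward --- but it is worth noting that the density statement is in the Whitney $C^\infty$ topology, not merely $C^0$, so the perturbed curves can have wildly different curvature from $\gamma_i$. This does not affect your argument (you never use curvature of the approximants), but it underlines why no curvature-based lower bound on the $Q_i$ can be extracted from the approximation alone: the perturbation step itself destroys any quantitative control that a hypothetical regularity of $\gamma$ might have conferred. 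Your conclusion --- that resolving Conjecture~\ref{conj:squarepeg} requires a genuinely new, purely metric or $C^0$-stable invariant replacing $\operatorname{\pi-d}$ --- is exactly the paper's own assessment of the state of the art.
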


While Conjecture~\ref{conj:squarepeg} is still open, here have been many attempts made to solve it. The interested reader can find a number of surveys of the history of these attempts, for example~\cite{MR2492772, MR3184501, Pak-Discrete-Poly-Geom}. This Appendix gives an overview of this history. We note that the majority of the existing solutions to Conjecture~\ref{conj:squarepeg} require that the Jordan curve be sufficiently regular.

In 1913,  A.~Emch~\cite{MR1506193,MR1506239,MR1506274}, then  K.~Von Zindler~\cite{MR1549095} in 1921, and C.M.~Christensen~\cite{MR0039284} in 1950, all proved the square-peg problem for convex closed curves.

In 1929, L.G.~Schnirel'man~\cite{Schnirelman:1944wf} (published in 1944), and then H. Guggenheimer \cite{MR188898} in 1965, both proved the square-peg problem for curves of continuous curvature of bounded variation (a class slightly larger than $C^2$-smooth). 

In 1961,  R.~Jerrard \cite{Jerrard}  showed that all analytic curves must have an odd or infinitely many inscribed squares. Earlier, O.~Frink, C.S.~Ogilvy~\cite{MR1527610} in 1950 proved the square-peg problem assuming some kind of smoothness (though this was not stated explicitly).

In 1989, W.~Stromquist \cite{MR1045781} proved the square-peg problem for $C^1$-smooth curves, as well as ones that are locally monotone\footnote{Locally monotone means that if every point $p$ of the curve has a neighborhood $U(p)$ and a direction $n(p)$ such that no chord of the curve is contained in $U(p)$ and parallel to $n(p)$.}. The latter class includes curves that are convex, or are polygonal, or piecewise $C^1$-smooth without cusps, or even certain curves which are nowhere differentiable.

In 1990,  H.B. Griffiths \cite{MR1095236} proved the square-peg problem for $C^2$-smooth (or higher); though this paper appears to contain serious errors (see~\cite{Slq,phd-Matschke}).

In 1991, V.~Klee and S.~Wagon \cite{MR1133201}  proved the square-peg problem or curves which star-like or symmetric 
around a point $z$ (specifically, where every line through $z$ meets the curve in exactly two points). A little later, in 1995, M.J.~Nielsen and S.E.~Wright \cite{MR1340790}  proved the square-peg problem for curves which are centrally symmetric though a point, or symmetric through reflection across a line. More recently in 2015, E.~Kelley (a student of Francis Su) wrote an honors thesis \cite{Kelley-honors} where she explicitly showed that a square is inscribed in the Koch snowflake curve (a centrally symmetric curve).
 
In 2008,  I.~Pak \cite{pak2008discrete} proved the square-peg problem for generic piecewise linear curves (with an elementary proof).

In 2011, B.~Matschke \cite{phd-Matschke,MR3184501} proved the square-peg problem for curves which do not contain small trapezoids of a certain type. It turns out that such curves form an open and dense subset of the space of embeddings $S^1\hookrightarrow \R^2$ with respect to the compact-open topology. Matschke also gives one of the few known global conditions on curves guaranteeing the existence of squares. Here, the curves must be contained in  an annulus with a certain ratio between the outer and inner radii, and which have a nontrivial winding number around the center of the annulus. In particular, this global condition does not require that the continuous curves are injective.

In 2014 (revised 2021), J.~Cantarella, E.~Denne, and J.~McCleary~\cite{Slq}  proved that an open dense set  (with respect to the Whitney $C^\infty$ topology) of $C^\infty$-smooth embeddings of $S^1$ in $\R^n$ have an odd number of squares.
   
 In 2017,  T.~Tao \cite{MR3731730} proved the square-peg problem for curves which are the union of two Lipschitz graphs that agree at the endpoints, and whose Lipschitz constants are strictly less than one. Such curves might be thought of intuitively as ``vertically star-like''.
 
Finally, this paper shows that the square-peg problem is proved for curves which are of finite total curvature and without cusps.

There are other closely related versions of the square-peg problem. For example, there are two slightly different discrete versions of the square-peg problem. The first, described by F.~Sagols and R.~Mar\'in \cite{MR2670395, MR3201252} in 2009, is where the vertices of polygonal Jordan curves are assumed to lie on a planar integer lattice grid $(\Z\times \R)\cup(\R \times \Z)$.  They then examine what kind of lattice curves have inscribed squares with integer coordinates. In 2014, V.H.~Pettersson {\em et al.} \cite{MR2798015} looked at curves where both vertices and edges lie on the planar integer lattice. They used computational methods to show that for $n\leq 13$, the side length of the largest square with edges on an $n\times n$ grid is at least $\nicefrac{1}{\sqrt{2}}$ times the side length of the largest axis-aligned square contained in the curve.  

There are also a number of results which count the number of squares. While several results \cite{Slq,Jerrard} show that we expect there to be generically an odd number of squares, we can not expect this in general. Indeed there are smooth convex curves (Popvassiliev~\cite{2008arXiv0810.4806P}) and piecewise linear curves (Sagols and Mar\'in~\cite{MR2798015}) which have exactly $n$ inscribed squares for any $n$. In addition, W.~van Heijst~\cite{vanheijst2014algebraic} proved that any real algebraic curve of degree $n$ in $\R^2$ inscribes either infinitely many squares or at most $\nicefrac{n^4-5m^2+4m}{4}$ squares.

\end{document}